\documentclass[12pt,a4paper,psamsfonts]{amsart}
\usepackage[english]{babel}
\usepackage[T1]{fontenc}
\usepackage{fancyhdr}
\usepackage{appendix}
\usepackage{amssymb,amscd,amsxtra,calc}
\usepackage{mathrsfs}
\usepackage{cmmib57}
\usepackage{multirow}
\usepackage[all]{xy}
\usepackage{longtable}
\usepackage[colorlinks=true,linkcolor=blue,anchorcolor=blue,citecolor=blue,pagebackref,linktocpage]{hyperref}
\usepackage{cleveref}
\usepackage{tikz}
\usepackage{cite}
\theoremstyle{plain}
    \newtheorem{thm}{Theorem}[section]

     \newtheorem{conjecture}[thm]{Conjecture}

    \newtheorem{lemma}[thm]{Lemma}
    \newtheorem{proposition}[thm]{Proposition}
    
     \newtheorem{problem}[thm]{Problem}
    \newtheorem{theorem}[thm]{Theorem}

\theoremstyle{definition}
    \newtheorem{definition}[thm]{Definition}

    \newtheorem*{notation*}{Notation and Terminology}
      
    \newtheorem{remark}[thm]{Remark}
    
\theoremstyle{remark}

\newcommand{\arxiv}[1]{\href{https://arxiv.org/abs/#1}{{\tt arXiv:#1}}}

\newcommand{\bP}{\mathbb{P}}

\newcommand{\bR}{\mathbb{R}}
\newcommand{\bC}{\mathbb{C}}

\newcommand{\bZ}{\mathbb{Z}}

\newcommand{\NE}{\overline{\operatorname{NE}}}

\newcommand{\Pic}{\operatorname{Pic}}

\newcommand{\mstriangle}[1]{
\begin{tikzpicture}[x=0.3cm,y=0.3cm]
\draw (-0.4,-0.433) -- (1.4,-0.433);
\draw (-0.2,-0.7794) -- (0.7,0.7794);
\draw (1.2,-0.7794) -- (0.3,0.7794);
\end{tikzpicture}
}
\newcommand{\mssharp}[1]{
\begin{tikzpicture}[x=0.3cm,y=0.3cm]
\draw (-0.8,-0.5) -- (0.8,-0.5);
\draw (-0.8,0.5) -- (0.8,0.5);
\draw (-0.5,-0.8) -- (-0.5,0.8);
\draw (0.5,-0.8) -- (0.5,0.8);
\end{tikzpicture}
}

\makeatletter

\newcommand{\Rmnum}[1]{\expandafter\@slowromancap\romannumeral #1@}
\makeatother

\begin{document}
\title[Bounded cohomology property]
{Smooth projective surfaces with bounded cohomology property II}
\author{Sichen Li}
\address{
School of Mathematics, East China University of Science and Technology, Shanghai 200237, P. R. China}
\email{\href{mailto:sichenli@ecust.edu.cn}{sichenli@ecust.edu.cn}}

\begin{abstract}
Let $X$ be a smooth projective surface with Picard number two, where either $X$ is a geometrically ruled surface or the closed Mori cone is rational polyhedral.
In this paper, we characterize $X$ with the bounded cohomology property, i.e., there exists a constant $c_X>0$ such that $h^1(\mathcal O_X(C))\le c_Xh^0(\mathcal O_X(C))$ for every curve $C$ on $X$.
\end{abstract}
\keywords{Bounded Negativity Conjecture, SHGH Conjecture, bounded cohomology property,  geometrically ruled surfaces, surfaces of general type}
\subjclass[2010]{14C20, 14J26, 14J29}
\maketitle
\section{Introduction}
A very open problem in the theory of projective surfaces is the following Bounded Negativity Conjecture (BNC for short).
\begin{conjecture}
\cite[Conjecture 1.1]{Bauer et al. 2013}
For a smooth projective surface $X$ over $\bC$, there exists an integer $b(X)\ge0$ such that $C^2\ge-b(X)$ for every curve $C$ on $X$.
\end{conjecture}
It is well-known that the following SHGH Conjecture implies Nagata's Conjecture \cite{Nagata59}, which is motivated by Hilbert's 14th problem (cf. \cite[Lemma 2.4]{CHMR13}).
\begin{conjecture}
\cite[Conjecture 2.5.1]{Bauer et al. 2012} 
Let $C\subseteq X$ be a curve where $X\to\bP^2$ is the blow-up of general points $p_1,\cdots, p_n$ with $n\ge10$.
Then $h^1(\mathcal O_X(C))=0$.
\end{conjecture}
Motivated by the BNC and the SHGH Conjecture, a fundamental problem is to classify smooth projective surfaces $X$ with the bounded cohomology property, which is due to Bauer et al. (cf. \cite[Conjecture 2.5.3]{Bauer et al. 2012}), as follows.
\begin{problem}
(cf. \cite[Question 6]{Ciliberto et al. 2017})
\label{Pro-BCP}
Classify all smooth projective surfaces $X$ with the Bounded cohomology property (BCP for short), i.e., there exists a constant $c_X>0$ such that $h^1(\mathcal O_X(C))\le c_Xh^0 (\mathcal O_X(C))$ for every curve $C$ on $X$.
\end{problem}
\begin{remark}
Ciliberto et al. \cite{Ciliberto et al. 2017} showed that the BCP implies the BNC.
For recent progress of the BCP, we refer to \cite{Bauer et al. 2012, Ciliberto et al. 2017, HL26, Li21, Li23}.
For some examples of surfaces with the BCP, we refer to \cite[Section 5]{HL26}.
\end{remark}
Ciliberto et al. showed in  \cite[Proposition 15]{Ciliberto et al. 2017} that if there exists a constant \(c_X>0\) satisfying \(h^1(\mathcal O_X(C))\le c_X h^0(\mathcal O_X(C))\) for every curve $C$ with \(C^2\ge0\), then there exists another constant \(m(X)>0\) such that \(l_C\le m(X)\) for all such curves.
As proved in the main results of \cite{HL26, Li21, Li23}, a primary strategy to establish the BCP is to show the uniform boundedness of $l_C$ (see  Definition \ref{Defn-l_C}).
Let $X$ be a geometrically ruled surface with invariant $e$ over a smooth curve $C$ of genus $g$, determined by a normalized locally free sheaf $\mathcal E$.
Note that the closed Mori cone $\NE(X)$ may not be polyhedral provided that $e<0$ (cf. \cite[Example 1.5.1]{Lazarsfeld04}).
Recently, Ziyu Hua and the author established in \cite[Theorem 1.10]{HL26} that  $X$ admits the uniform boundedness of $l_C$.
In this paper,  we first characterize geometrically ruled surfaces with the BCP as follows.
\begin{theorem}
\label{Thm-Ruled}
Let $X$ be a geometrically ruled surface with invariant $e$ over a smooth curve $C$ of genus $g$, determined by a normalized locally free sheaf $\mathcal E$.
Let $C_0\subseteq X$ be a section such that $\mathcal O_X(C_0)\cong O_X(1)$ and $C_0^2=-e$, and let $f$ be a fibre.
Then the following statements hold.
\begin{enumerate}
	\item If $e\ne0$, then $X$ satisfies the BCP.
	\item  If $e=0$ and $\mathcal E$ is decomposable, then $X$ satisfies the BCP.
	\item If $e=0$ and $\mathcal E$ is indecomposable, then $X$ satisfies the BCP if and only if $$\liminf_{a\to+\infty}\frac{h^0(\mathcal O_X(aC_0+bf))}{a}>0$$ for each $b\in (0,g-1)\cap\bZ$.
	\end{enumerate}
\end{theorem}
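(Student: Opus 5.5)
We work throughout on $X=\mathbb P(\mathcal E)\xrightarrow{\pi}C$. Any curve $D$ on $X$ is numerically equivalent to $aC_0+bf$ with $a\ge0$. Write $D\equiv aC_0+\pi^*\mathfrak b$ with $\deg\mathfrak b=b$. Since $R^1\pi_*\mathcal O_X(aC_0)=0$ for $a\ge0$, the Leray spectral sequence gives
$$h^i(\mathcal O_X(D))=h^i\bigl(C,\operatorname{Sym}^a\mathcal E\otimes\mathcal O_C(\mathfrak b)\bigr),\qquad i=0,1.$$
So the whole problem is to compare $h^1$ with $h^0$ for the bundles $S^a\mathcal E(\mathfrak b)$ on $C$. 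These have rank $a+1$ and degree $(a+1)(b-\frac{ae}{2})$, which follows from $\deg\mathcal E=-e$.

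Since $h^0(D)\ge1$ for an effective curve, it suffices to prove an estimate of the form
$$h^1(\mathcal O_X(D))\le c\,(h^0(\mathcal O_X(D))+1)$$
uniformly. The basic tool is to filter $S^a\mathcal E$ and use $h^1(L)=0$ for line bundles $L$ with $\deg L>2g-2$, and $h^1(L)\le g$ always. This gives the crude bound $h^1\le\#\{\text{graded pieces of degree}\le 2g-2\}\cdot g$, while Riemann–Roch gives
$$h^0-h^1=(a+1)\Bigl(b-\tfrac{ae}{2}+1-g\Bigr).$$

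\textbf{Case $e>0$.} Here $\mathcal E$ is decomposable with filtration $0\to\mathcal O(-\mathfrak e)\to\mathcal E\to\mathcal O\to0$. The graded pieces of $S^a\mathcal E(\mathfrak b)$ have degrees $b-ie$ for $0\le i\le a$. For $D$ effective and irreducible with $D\neq C_0$ one has $b\ge ae$, so all but $O(g/e)$ of the pieces have degree $>2g-2$. Hence $h^1\le g(2g/e+1)$, which is bounded. For $D=C_0$ the statement is trivial. A general curve is a sum of irreducible ones, but BCP is stated for every curve, so I would simply apply the same degree count to arbitrary effective classes. Effectivity forces $b\ge 0$, and the pieces with $b-ie\le 2g-2$ contribute $h^1\le g$ each. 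When there are many such pieces, $a$ is large compared with $b/e$; this is the regime where $D$ contains $C_0$ with high multiplicity. In that regime we peel off $C_0$ using the sequence
$$0\to\mathcal O(D-C_0)\to\mathcal O(D)\to\mathcal O_{C_0}(D)\to0$$
together with $D\cdot C_0<0$, which gives $h^0(D)=h^0(D-kC_0)$ and lets us bound $h^1(D)$ by $h^1(D-kC_0)$ plus a controllable term. I expect this bookkeeping to be routine.

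\textbf{Case $e<0$.} We have $-2g\le e<0$, and every effective $D$ satisfies $b\ge\frac{ae}{2}$. Moreover $\mathcal E$ is stable, so $S^a\mathcal E$ is semistable in characteristic $0$. A semistable bundle with slope $\mu>2g-2$ has $h^1=0$. A semistable bundle with slope $<0$ has $h^0=0$, which is impossible for effective $D$. In the remaining window $0\le b-\frac{ae}{2}\le 2g-2$ we use Clifford-type bounds for semistable bundles:
$$h^0\ge(a+1)(\mu+1-g)\quad\text{and}\quad h^0\le(a+1)\Bigl(\tfrac{\mu}{2}+1\Bigr).$$
Serre duality then gives $h^1\le(a+1)(g-\mu/2)$, roughly. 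It remains to bound $h^1$ by $h^0$. If $\mu$ is bounded below by a positive constant, this follows from known results on effective cones of such surfaces: curves exist only for $\mu\ge$ some threshold, and Riemann–Roch plus semistability give $h^0\gtrsim a$. I would cite \cite[Theorem 1.10]{HL26} (uniform boundedness of $l_C$) and the reduction \cite[Proposition 15]{Ciliberto et al. 2017} to conclude.

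\textbf{Case $e=0$.} Here the slope is $\mu=b$. If $\mathcal E=\mathcal O\oplus\mathcal L$ with $\deg\mathcal L=0$, then
$$S^a\mathcal E(\mathfrak b)=\bigoplus_{i=0}^{a}\mathcal L^{i}(\mathfrak b).$$
For $b\ge 2g-1$ we get $h^1=0$. For $b<0$ we get $h^0=0$, so no curves occur. For $0\le b\le 2g-2$, effectivity of $D$ forces some summand to be effective. I would argue via torsion/non-torsion of $\mathcal L$ that the number of effective summands is comparable to $a$ whenever $h^1$ is large; otherwise $D$ is a sum of fibres and sections $C_0+\ldots$, and reduction along $C_0$ as in the $e>0$ case applies. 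When $\mathcal E$ is indecomposable, Atiyah's classification gives $\mathcal E=F_2$, the nonsplit extension of $\mathcal O$ by $\mathcal O$, and then $S^a\mathcal E$ is unipotent. For $b\ge 2g-1$ or $b=0$ everything is bounded, since $h^0(F_{a+1})\le 1$ when $b=0$ and $h^1\le g$. For $b=g-1$ we have $\chi=0$. For $b\in(0,g-1)$ we have $\chi<0$, so
$$h^1=h^0+(a+1)(g-1-b).$$
The BCP then holds iff $h^0\ge c\,a$ for large $a$, which is exactly the stated $\liminf$ condition. For $b\in[g-1,2g-2]$ we have $\chi\ge0$, and Serre duality symmetry controls $h^1$ by $h^0+O(1)$ times $a$. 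Care is needed at $b=g-1$, where $h^1=h^0$ anyway.

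\textbf{Main obstacle.} The $e<0$ window and the indecomposable $e=0$ case with $b\ge g-1$ are where I expect difficulty: bounding $h^1$ linearly in $h^0$ when $h^0$ might be tiny. Here I would first try to use that semistability with $\chi\ge0$ forces $h^0\ge\chi$, and that $h^1$ is at most $(a+1)g$. This yields a linear bound only if $\chi\gtrsim a$, so the edge case $\chi=o(a)$, i.e.\ $b=g-1$ with $e=0$, or $b=\frac{ae}{2}+g-1$ with $e<0$, must be handled separately.
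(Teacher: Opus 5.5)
Your $e>0$ argument is correct and is a genuine alternative to the paper's. You bound $h^1$ absolutely by filtering $S^a\mathcal E(\mathfrak b)$ with pieces $\mathcal O_C(\mathfrak b+j\mathfrak e)$ of degree $b-je\ge0$. Only the filtration $0\to\mathcal O_C\to\mathcal E\to\mathcal O_C(\mathfrak e)\to0$ is needed; $\mathcal E$ need not split when $0<e\le 2g-2$. The paper instead uses Proposition~\ref{BL-Prop} and shows $a<(2g-2)/e$ whenever the correction term is positive. Your treatment of $e=0$, $\mathcal E$ indecomposable, $0<b<g-1$ is exactly Lemma~\ref{liminf-lem}.

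The first genuine gap is $e<0$. By Proposition~\ref{BL-Prop}, $h^1=h^0+(a+1)(g-1-\mu)$ with $\mu=b-\frac{ae}{2}$. So everything hinges on curves with $0<\mu<g-1$, where you must show $h^0(\mathcal O_X(D))\ge c\,a$. In that range $\chi<0$, so ``Riemann--Roch plus semistability give $h^0\gtrsim a$'' is false: Riemann--Roch gives a negative lower bound, and Clifford gives only an upper bound. Your citations do not supply the bound either. The $l_C$-bound of [HL26] involves no $h^0$ when $D^2>0$. Proposition~\ref{Prop-Numerical} still asks for $D^2\le m\,h^0$ or $h^1\le m\,h^0$ with $D^2=2a\mu\ge a$, i.e.\ again $h^0\gtrsim a$. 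Your ``main obstacle'' is also misplaced: if $\chi\ge0$ then $h^1=h^0-\chi\le h^0$ outright, so the hard range is $\chi<0$.

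The paper reduces to $D^2>0$ and splits off $a=1$ via Hartshorne, Prop.~V.2.21, bounding it with $(K_X-D)\cdot D\le 2g-2-e$. For $a\ge2$ it writes $D=mD_0$ and invokes $h^0(mD_0)=\frac{m^2D_0^2}{2}+O(m)$. Be aware, though, that in the window $m\,\mu(D_0)<g-1$ with $\mu(D_0)\ge\frac12$ forces $m<2g-2$. So one always lands in the paper's case $m\le N$, whose constant $1+(g-1)(a_0N+1)$ depends on $D_0$. That step does not give a uniform $c_X$ either, so you cannot simply import it.

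Statement (2) is also not proved. With $\mathcal E=\mathcal O_C\oplus\mathcal L$ you correctly get $h^k(\mathcal O_X(D))=\sum_{j=0}^a h^k(\mathcal L^j(\mathfrak b))$ and $h^1=h^0+(a+1)(g-1-b)$. So for $0<b<g-1$ you need $\gtrsim a$ effective summands. If $\mathcal L$ is torsion of order $n$, periodicity gives $h^0\ge\max\{1,\lfloor\frac{a+1}{n}\rfloor\}$ and you are done. For non-torsion $\mathcal L$, however, your dichotomy is only announced and is unjustified. Irreducibility of $D$ forces just the two extreme summands $\mathcal O_C(\mathfrak b)$ and $\mathcal L^a(\mathfrak b)$ to be effective, since they detect whether $D$ contains one of the two disjoint sections. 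The section $(s_0,0,\dots,0,s_a)$, with $\operatorname{div}s_0\cap\operatorname{div}s_a=\emptyset$ and $s_0$ having a simple zero, already cuts out an irreducible curve (locally $t^a$ equals a function with a simple zero). So nothing forces the intermediate summands to be effective. The paper sidesteps this by quoting $h^k(\mathcal O_X(D))=(a+1)h^k(\mathcal O_C(\mathfrak b))$, which holds for $\mathcal L\cong\mathcal O_C$ but not in general, so it does not close the gap for you.

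Smaller slips:
\begin{enumerate}
\item $h^1(L)\le g$ fails for $\deg L<0$, though this is harmless where you use it.
\item $\mathcal E\cong F_2$ is Atiyah's elliptic-curve classification. For $g\ge2$, any non-split extension of $\mathcal O_C(\mathfrak e)$ by $\mathcal O_C$ with $\deg\mathfrak e=0$ occurs; your window computation does not need this.
\item At $b=0$ one has $h^1=h^0+(a+1)(g-1)$, not $\le g$. Treat $D^2=0$ as the paper does, via the $l_C$-bound and Proposition~\ref{Prop-Numerical}.
\item Drop the detour through arbitrary effective divisors. Curves are irreducible here, and the bound genuinely fails for $kC_0$ when $e>0$: $h^0=1$ while $h^1=1+(k+1)(\frac{ke}{2}+g-1)$. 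So no ``peeling off $C_0$'' can work.
\end{enumerate}
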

Let $X$ be a smooth projective surface with Picard number $\rho(X)=2$ such that $\NE(X)$ is rational polyhedral.
If the Kodaira dimension $\kappa(X)=-\infty$, by the Enriques-Kodaira classification of relatively minimal surfaces (cf. \cite{BHPV04, Beauville96}),  $X$ is either a ruled surface or a point blow-up of $\bP^2$.
Note that $X$ satisfies the BCP if it is a point blow-up of $\bP^2$ (cf. \cite[Lemma 3.2]{Li23}).
To characterize $X$ with the BCP, by \cite[Propositions 2.6 and 2.7]{HL26} and Theorem \ref{Thm-Ruled},  it suffices to consider the essential case of $\kappa(X)=2$ as follows.
\begin{theorem}
\label{kappa2-thm}
Let $X$ be a smooth projective surface of general type.
Suppose $\NE(X)=\bR_{\ge0}[C_1]+\bR_{\ge0}[C_2]$, and $C_1$ and $C_2$ are curves on $X$.
Then $X$ satisfies the BCP if and only if the following statements hold.
\begin{enumerate}
	\item we may assume that  $K_X=aC_1+a'C_2$, $D=a_1C_1+a_2C_2$, with $a,a',a_1,a_2\in\bR_{>0}$, and either (i) $C_1^2=C_2^2=0$, or (ii) $C_1^2=0, C_2^2<0$.
	\item If $C_1^2=0$ and $C_2^2=0$, then either $$\liminf_{a_1\to+\infty}\frac{h^0(\mathcal O_X(a_1C_1+a_2C_2))}{a_1}>0, \text{ with } a_1>a, \text{ and } 0<2a_2<a'$$
	or $$\liminf_{a_2\to+\infty}\frac{h^0(\mathcal O_X(a_1C_1+a_2C_2))}{a_2}>0, \text{ with } 0<2a_1<a \text{ and } a_2>a'.$$
	\item If $C_1^2=0$ and $C_2^2<0$, then 
$$\liminf_{a_1\to+\infty}\frac{h^0(\mathcal O_X(a_1C_1+a_2C_2))}{a_1}>0, \text{ with } a_1>a \text{ and } 0<2a_2<a'.$$ 
\end{enumerate}
\end{theorem}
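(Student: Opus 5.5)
The plan is to work entirely with Riemann--Roch on $X$ and to reduce the BCP to an asymptotic comparison between $h^0$ and $h^1$ along the two boundary rays of $\NE(X)$. For a curve $D=a_1C_1+a_2C_2$ we have
$$\chi(\mathcal O_X(D))=\chi(\mathcal O_X)+\tfrac12 D\cdot(D-K_X),$$
hence
$$h^1(\mathcal O_X(D))=h^0(\mathcal O_X(D))+h^2(\mathcal O_X(D))-\chi(\mathcal O_X)-\tfrac12 D\cdot(D-K_X).$$
By Serre duality, $h^2(\mathcal O_X(D))=h^0(\mathcal O_X(K_X-D))$. The BCP therefore fails exactly when there are curves $D$ with $h^0(\mathcal O_X(D))$ small but $h^0(\mathcal O_X(K_X-D))-\tfrac12 D\cdot(D-K_X)$ large compared with it.

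First I reduce to the configurations in (1). Since $X$ is of general type, $K_X$ is big, so it lies in the interior of $\NE(X)$ and $K_X=aC_1+a'C_2$ with $a,a'>0$ (after passing to the minimal model if necessary; with $\rho=2$, $X$ is minimal or the blow-up of a surface of Picard number one, and the latter case is handled as in \cite[Propositions 2.6 and 2.7]{HL26}). If both $C_i^2<0$, the cone is spanned by two negative curves. Then every curve with $D^2\ge0$ has $D$ ample or nef and big, $l_C$ is bounded, and BCP holds by the results of \cite{HL26}. Hence either the BCP is automatic in this case or it should be excluded, and I will check which against the statement. If one $C_i^2>0$, then $C_i$ would lie in the interior of the cone, a contradiction. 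So the boundary rays have $C_i^2\le 0$, which gives cases (i) and (ii).

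The key step is an analysis near a ray $C_1$ with $C_1^2=0$. Take $D=a_1C_1+a_2C_2$ with $a_1\to\infty$ and $a_2$ fixed. Then
$$D\cdot(D-K_X)=a_1\bigl(2a_2-a'\bigr)C_1\cdot C_2+O(1),$$
which is linear in $a_1$. When $2a_2<a'$ this term is negative, so $-\tfrac12 D\cdot(D-K_X)$ grows like a positive multiple of $a_1$. Since $a_1>a$, the class $K_X-D$ is not effective, so $h^2=0$. Thus $h^1\le c\,h^0$ along this family is equivalent to $\liminf_{a_1} h^0(\mathcal O_X(D))/a_1>0$. That gives necessity of the conditions in (2) and (3).

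For sufficiency, I split an arbitrary curve $D$ into three regions.
\begin{itemize}
\item If $D$ is far from both rays, then $D^2$ is quadratic and dominates, so $h^1$ is bounded by $O(h^0)$ via the boundedness of $l_C$.
\item If $D$ lies in a bounded region, namely $a_1\le a$ or $a_2\le a'$, then only finitely many classes up to linear growth occur and one uses $h^2\le h^0(K_X)$.
\item If $D$ is near a ray, the stated liminf hypothesis controls the linear term.
\end{itemize}
In case (ii), the ray $C_2$ with $C_2^2<0$ contributes only bounded negativity; there $h^1$ is handled by peeling off $C_2$ via the sequence $0\to\mathcal O(D-C_2)\to\mathcal O(D)\to\mathcal O_{C_2}(D)\to0$.

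I expect the main obstacle to be the uniform bookkeeping in the sufficiency direction. One has to show that the liminf along each fixed $a_2$ with $0<2a_2<a'$ (only finitely many integer values) gives a uniform constant $c_X$. One also has to check that for $2a_2\ge a'$ the linear term is nonnegative, so that no hypothesis is needed there.
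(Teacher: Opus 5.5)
Your proposal follows the paper's proof: Riemann--Roch with $(K_X-D)\cdot D=\bigl(a_1(a'-2a_2)+aa_2\bigr)(C_1\cdot C_2)$ (plus the extra term $a_2(a_2-a')(-C_2^2)$ in case (ii)); the observation that $h^1-h^0$ grows linearly only along the families with $a_1\to+\infty$, $0<2a_2<a'$ (and the mirror family in case (i)); the resulting equivalence of the BCP with the $\liminf$ condition; and separate bounds for the remaining regions. The differences are minor: you get $h^2(\mathcal O_X(D))=0$ from $a_1>a$ where the paper uses $h^2-\chi(\mathcal O_X)\le q(X)-1$, and you use quadratic growth of $D^2$ where the paper uses Kawamata--Viehweg vanishing; in case (ii) peeling off $C_2$ is unnecessary, because $D\cdot C_2\ge 0$ for every curve $D\ne C_2$ already gives the reduction to $a_1>a$, $0<2a_2<a'$ that the paper takes from [Li23, Proposition 3.5].
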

\begin{remark}
In this paper, we study the BCP for smooth projective surfaces $X$ with $\rho(X)=2$, where either $X$ is a geometrically ruled surface or $\NE(X)$ is rational polyhedral.
The classification of the BCP for surfaces with higher Picard numbers remains a widely open problem.
In this setting, one is generally forced to assume that $\NE(X)$ is rational polyhedral.
For instance, the authors of \cite{HL26} established the BCP for Mori dream surfaces, for which $q(X)=0$, $\overline{\operatorname{NE}}(X)$ is rational polyhedral and every nef divisor is semiample.
Their proof hinges on the condition that 
\begin{equation*}
\label{eq:key-condition}
\operatorname{Nef}(X)=\sum \mathbb R_{\ge 0}[D_i] \text{ and every } D_i \text{ satisfies } \kappa(X,D_i)\ge 1
\end{equation*}
which holds automatically for Mori dream surfaces.
\end{remark}

 {\bf Acknowledgment.}
The author would like to thank the referees for their valuable comments and suggestions.
The research is supported by the Shanghai Sailing Program (No. 23YF1409300).
\section{Proof of Theorems \ref{Thm-Ruled} and \ref{kappa2-thm}}
\label{Pre}
{\bf Notation and Terminology.}
In this paper, $X$ is a smooth projective surface over $\bC$.
\begin{itemize}
\item By a curve on $X$, we mean a reduced and irreducible curve.
\item A negative curve on $X$ is a curve with negative self-intersection.
 \item A prime divisor $C$ on $X$ is either a nef curve or a negative curve (in the latter case, $h^0(\mathcal O_X(C))=1$).
 \item For every $\bR$-divisor $C$ with $C^2\ne0$ on $X$, we define a value $l_C$ associated to $C$ as follows:
\begin{equation*}
                                                       l_C:=\frac{(K_X\cdot C)}{\max\bigg\{ 1, C^2\bigg\}}.
\end{equation*}
\item For every $\bR$-divisor $C$ with $C^2=0$ on $X$, we define a value $l_C$ associated to $C$ as follows:
\begin{equation*}
                                   l_C:=\frac{(K_X\cdot C)}{\max\bigg\{1,h^0(\mathcal O_X(C))\bigg\}}.
\end{equation*}
\end{itemize}
\begin{definition}
\cite[Definition 2.1]{HL26}
 \label{Defn-l_C}
We say a smooth projective surface $X$ admits uniform boundedness of $l_C$ if there exists a positive constant $m(X)$ such that $l_C\le m(X)$ for every curve $C$ on $X$.
\end{definition}
\begin{proposition}
\label{Prop-Numerical}
(cf. \cite[Proposition 2.3]{Li21})
Let $X$ be a smooth projective surface.
If $X$ admits the uniform boundedness of $l_C$, and there exists a positive constant $m(X)$ such that either $|C^2|\le m(X)h^0(\mathcal O_X(C))$ or $h^1(\mathcal O_X(C))\le m(X)h^0(\mathcal O_X(C))$ for every curve $C$ on $X$, then $X$ satisfies the BCP.
\end{proposition}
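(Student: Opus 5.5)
We need to prove the proposition: given uniform boundedness of $l_C$ and, for every curve $C$, either $|C^2|\le m(X)h^0(\mathcal O_X(C))$ or $h^1(\mathcal O_X(C))\le m(X)h^0(\mathcal O_X(C))$, the surface $X$ satisfies the BCP. The only tool needed is Riemann--Roch, used to express $h^1$ through $h^0$, $h^2$, $C^2$ and $K_X\cdot C$. Fix a curve $C$. For curves of the second kind there is nothing to prove, so assume $|C^2|\le m(X)h^0(\mathcal O_X(C))$. Riemann--Roch gives
\[
h^1(\mathcal O_X(C)) = h^0(\mathcal O_X(C))+h^2(\mathcal O_X(C)) - \chi(\mathcal O_X) - \tfrac12 (C^2 - K_X\cdot C).
\]

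First I bound $h^2(\mathcal O_X(C))=h^0(\mathcal O_X(K_X-C))$. Since $C$ is effective, $K_X-C\le K_X$, so $h^2\le h^0(K_X)=p_g(X)$, a constant. Also $-\chi(\mathcal O_X)$ is a constant, and $h^0(\mathcal O_X(C))\ge1$ because $C$ is effective. Hence all constants can be absorbed into a multiple of $h^0$:
\[
h^1 \le \bigl(1+p_g+|\chi(\mathcal O_X)|\bigr)h^0 + \tfrac12|C^2| + \tfrac12 K_X\cdot C .
\]
The term $\tfrac12|C^2|$ is at most $\tfrac12 m(X)h^0$ by hypothesis.

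The main step is the term $K_X\cdot C$; I may assume $K_X\cdot C>0$, since otherwise it only helps. I split into cases according to the definition of $l_C$.
\begin{itemize}
\item If $C^2=0$, then $l_C=K_X\cdot C/\max\{1,h^0\}=K_X\cdot C/h^0$. Uniform boundedness of $l_C$ gives $K_X\cdot C\le m(X)h^0$.
\item If $C^2\ne0$, then $K_X\cdot C\le m(X)\max\{1,C^2\}$. If $C^2\le1$ this is at most $m(X)\le m(X)h^0$. Otherwise $C^2=|C^2|\le m(X)h^0$, so $K_X\cdot C\le m(X)^2h^0$.
\end{itemize}
Combining the cases, $h^1(\mathcal O_X(C))\le c_Xh^0(\mathcal O_X(C))$ with
\[
c_X = 1+p_g+|\chi(\mathcal O_X)| + m(X) + m(X)^2,
\]
which is independent of $C$ and also covers curves of the second kind. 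This proves the BCP.

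The only mild obstacle is the case split in the definition of $l_C$. When $C^2\ne0$, the denominator is $C^2$ rather than $h^0$, so I need the hypothesis $|C^2|\le m(X)h^0$ a second time to convert the bound on $K_X\cdot C$ into a bound in terms of $h^0$.
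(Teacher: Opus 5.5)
Your argument is correct and is essentially the standard one behind \cite[Proposition 2.3]{Li21}, which the paper cites rather than reproving: Riemann--Roch, a uniform constant bound on $h^2(\mathcal O_X(C))-\chi(\mathcal O_X)$, and the case split in the definition of $l_C$, with $|C^2|\le m(X)h^0(\mathcal O_X(C))$ used a second time when $C^2>1$. The only cosmetic difference is that the paper's toolkit bounds $h^2(\mathcal O_X(C))-\chi(\mathcal O_X)$ by $q(X)-1$ (Proposition~\ref{Serre}), a sharper form of your constant $p_g(X)+|\chi(\mathcal O_X)|$.
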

The following result is due to Serre duality.
\begin{proposition}\label{Serre}
Let $C$ be a curve on a smooth projective surface $X$.
Then
\begin{equation*} 
 h^2(\mathcal O_X(C))-\chi(\mathcal O_X)\le q(X)-1.	
\end{equation*}
\end{proposition}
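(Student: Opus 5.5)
The statement is Proposition \ref{Serre}: for every curve $C$ on $X$ we have $h^2(\mathcal O_X(C))-\chi(\mathcal O_X)\le q(X)-1$. The plan is to bound $h^2(\mathcal O_X(C))$ by $p_g(X)=h^2(\mathcal O_X)$ and then expand $\chi(\mathcal O_X)$ in terms of $p_g$ and $q$.

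First, by Serre duality,
$$h^2(\mathcal O_X(C))=h^0(\mathcal O_X(K_X-C)).$$
Since $C$ is an effective nonzero divisor, the inclusion $\mathcal O_X(-C)\hookrightarrow\mathcal O_X$ tensored with $\omega_X$ gives an injection
$$H^0(\mathcal O_X(K_X-C))\hookrightarrow H^0(\mathcal O_X(K_X)).$$
Equivalently, multiplication by a defining section $s_C$ sends $H^0(K_X-C)$ into $H^0(K_X)$, and this map is injective because $X$ is integral. Hence
$$h^2(\mathcal O_X(C))\le h^0(K_X)=p_g(X).$$

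Next, using $\chi(\mathcal O_X)=1-q(X)+p_g(X)$, we obtain
$$h^2(\mathcal O_X(C))-\chi(\mathcal O_X)\le p_g(X)-1+q(X)-p_g(X)=q(X)-1,$$
which is the claimed inequality.

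There is no real obstacle here. The only point to state explicitly is the injectivity of multiplication by $s_C$, and this holds simply because $C$ is effective. Irreducibility and reducedness of $C$ are not needed: the argument works verbatim for any effective divisor.
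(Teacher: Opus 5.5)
Your argument is correct and is exactly what the paper means by ``due to Serre duality'': $h^2(\mathcal O_X(C))=h^0(\mathcal O_X(K_X-C))\le p_g(X)$ because $C$ is effective, and then $\chi(\mathcal O_X)=1-q(X)+p_g(X)$ gives the bound. The paper states the proposition without proof, so your write-up supplies these same two steps.
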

From now on, we will consistently use the definitions and notation of geometrically ruled surfaces as presented in \cite[Chapter V.2]{Hartshorne77}.
\begin{proposition}
\label{Prop-ruled}
\cite[Propositions V.2.3 and V.2.9]{Hartshorne77}
Let $\pi: X\to C$ be a geometrically ruled surface with invariant $e$ over a smooth curve $C$ of genus $g$.
Let $C_0\subseteq X$ be a section such that $\mathcal O_X(C_0)\cong O_X(1)$ and $C_0^2=-e$, and let $f$ be a fibre.
Then we have the following result:
$$
	\Pic 	X\cong \bZ C_0\oplus\pi^*\Pic C, C_0\cdot f=1, f^2=0, \text{~and ~}~K_X\equiv -2C_0+(2g-2-e)f.
$$
\end{proposition}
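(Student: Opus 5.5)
The plan is to follow the standard structure of a $\bP^1$-bundle $\pi\colon X=\bP(\mathcal E)\to C$ and then fix the numerical data by intersection theory and adjunction.

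\textbf{Step 1: the Picard group.} First I will show $\Pic X\cong \bZ C_0\oplus \pi^*\Pic C$. Given a line bundle $L$ on $X$, its restriction to every fibre $f\cong\bP^1$ is $\mathcal O_{\bP^1}(n)$, and $n$ is independent of the fibre because the degree is locally constant in a flat family. Then $L\otimes\mathcal O_X(-nC_0)$ has degree $0$ on every fibre. By Grauert's theorem (or cohomology and base change), $M=\pi_*(L(-nC_0))$ is a line bundle on $C$, and the natural map $\pi^*M\to L(-nC_0)$ is an isomorphism. This gives surjectivity. For injectivity, suppose $\mathcal O_X(nC_0)\otimes\pi^*M\cong\mathcal O_X$. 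Restricting to a fibre forces $n=0$, and then the projection formula gives $\pi_*\pi^*M=M\cong\mathcal O_C$. This step uses the identification $\mathcal O_X(C_0)\cong\mathcal O_X(1)$, which restricts to $\mathcal O_{\bP^1}(1)$ on each fibre. I expect it to be the only step with genuine content, namely the base-change argument.

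\textbf{Step 2: intersection numbers.} Any two fibres are linearly equivalent up to $\pi^*$ of a degree-zero class, and distinct fibres are disjoint. Hence $f^2=0$. Since $C_0$ is a section, it meets each fibre transversally in exactly one point, so $C_0\cdot f=1$. The equality $C_0^2=-e$ is part of the hypothesis, coming from the normalization $e=-\deg\mathcal E$.

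\textbf{Step 3: the canonical class.} By Step 1 we can write $K_X\equiv aC_0+bf$, since numerically only the degree of the $\pi^*\Pic C$ component matters. Apply adjunction to a fibre $f\cong\bP^1$:
\[
-2=2p_a(f)-2=f^2+K_X\cdot f=a,
\]
so $a=-2$. Apply adjunction to $C_0\cong C$:
\[
2g-2=C_0^2+K_X\cdot C_0=-e+(-2)(-e)+b=e+b,
\]
so $b=2g-2-e$. Hence $K_X\equiv -2C_0+(2g-2-e)f$.

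Alternatively, the relative Euler sequence gives $\omega_{X/C}\cong\mathcal O_X(-2)\otimes\pi^*\det\mathcal E$, which yields the same class directly. The adjunction computation, however, needs only Steps 1 and 2.
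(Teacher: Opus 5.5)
Your proof is correct and follows essentially the same route as the source the paper relies on: the paper gives no proof of its own and simply cites Hartshorne, Chapter V.2, whose argument you reproduce. That argument obtains $\Pic X$ from the base-change fact that a line bundle of degree $0$ on every fibre is pulled back from $C$, reads off $C_0\cdot f=1$ and $f^2=0$ from the section and disjoint fibres, and gets $K_X\equiv -2C_0+(2g-2-e)f$ by writing $K_X\equiv aC_0+bf$ and applying adjunction to $f$ and to $C_0$. The one point you leave implicit, that $\pi^*$ of a degree-zero divisor on $C$ is numerically trivial (used for $f^2=0$ and for reducing $K_X$ to the form $aC_0+bf$), follows at once by intersecting with $C_0\cong C$ and with fibres.
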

The following result is due to Biancofiore and Livorni \cite[p.175, (0.8)]{BL87}.
\begin{proposition}
\label{BL-Prop}
Let $X$ be a geometrically ruled surface with invariant $e$ over a smooth curve $C$ of genus $g$.
Then for every curve $D=aC_0+bf$ as in Proposition \ref{Prop-ruled}, we have
$$
 h^1(\mathcal O_X(D))=h^0(\mathcal O_X(D))+\bigg(\frac{ae}{2}+g-1-b\bigg)(a+1).	
$$
\end{proposition}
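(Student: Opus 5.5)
The plan is to combine Riemann--Roch on $X$ with a vanishing statement for $h^2$, using the intersection data of Proposition \ref{Prop-ruled}. Here $D=aC_0+bf$, where $bf$ stands for the pullback of a divisor of degree $b$ on $C$. Only numerical classes enter the computation, so we may compute with $D\equiv aC_0+bf$ throughout.

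\emph{Step 1: vanishing of $h^2$.} By Serre duality $h^2(\mathcal O_X(D))=h^0(\mathcal O_X(K_X-D))$. Since $D$ is a curve and $f$ is nef (it moves in a base-point-free pencil), we have $a=D\cdot f\ge 0$. By Proposition \ref{Prop-ruled},
$$(K_X-D)\cdot f=(-2-a)\,C_0\cdot f=-2-a<0.$$
Any effective divisor $E$ satisfies $E\cdot f\ge0$, because $f$ is nef. Hence $K_X-D$ is not effective, and $h^2(\mathcal O_X(D))=0$.

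\emph{Step 2: Riemann--Roch.} We have $\chi(\mathcal O_X(D))=\chi(\mathcal O_X)+\tfrac12 D\cdot(D-K_X)$, with $\chi(\mathcal O_X)=1-g$ for a ruled surface over a curve of genus $g$. Proposition \ref{Prop-ruled} gives $C_0^2=-e$, $C_0\cdot f=1$, $f^2=0$ and $K_X\equiv-2C_0+(2g-2-e)f$. From these,
$$D^2=-a^2e+2ab,\qquad D\cdot K_X=2ae+a(2g-2-e)-2b=ae+2a(g-1)-2b.$$
Therefore
$$\tfrac12 D\cdot(D-K_X)=-\tfrac{ae}{2}(a+1)+b(a+1)-a(g-1).$$
Adding $1-g$ and collecting the $(g-1)$ terms gives
$$\chi(\mathcal O_X(D))=-(a+1)\Big(\tfrac{ae}{2}+g-1-b\Big).$$

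\emph{Step 3: conclusion.} Combining Steps 1 and 2 gives $h^0-h^1=\chi(\mathcal O_X(D))$, which rearranges to the stated formula. The only point needing care is the vanishing in Step 1. Its one substantive input is $a\ge0$, which holds because $D$ is a curve (indeed any effective divisor would do). The remainder is a routine intersection computation.
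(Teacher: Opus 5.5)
\textbf{Verdict: correct, and self-contained where the paper only cites.} The paper gives no argument for this statement; it quotes it from Biancofiore--Livorni \cite[p.~175, (0.8)]{BL87}. Your derivation works on its own terms:
\begin{itemize}
\item Riemann--Roch with $\chi(\mathcal O_X)=1-g$ and the intersection numbers of Proposition~\ref{Prop-ruled} gives $\chi(\mathcal O_X(D))=-(a+1)\bigl(\frac{ae}{2}+g-1-b\bigr)$. I checked the algebra.
\item Serre duality together with $(K_X-D)\cdot f=-2-a<0$ and the nefness of $f$ gives $h^2(\mathcal O_X(D))=0$.
\end{itemize}

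\textbf{What your route adds over the citation.} It makes the hypotheses transparent. The difference $h^1-h^0$ depends only on the numerical class of $D$. The only extra input needed is $a\ge -1$, which is what forces $(K_X-D)\cdot f<0$. So the identity holds for every line bundle $\mathcal O_X(aC_0+\mathfrak b f)$ with $\deg\mathfrak b=b$ and $a\ge -1$, not only for curves. This matters in Lemma~\ref{liminf-lem}, where the formula is applied to $aC_0+bf$ for all large $a$ without knowing that these classes are represented by curves.

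\textbf{A small wording point.} For $g\ge 1$ the fibres do not form a linear pencil, since $h^0(\mathcal O_X(f))=1$. So ``moves in a base-point-free pencil'' is only right if you mean an irrational pencil. Nefness of $f$ is more cleanly justified in either of two ways:
\begin{itemize}
\item $f$ is an irreducible curve with $f^2=0\ge 0$;
\item $f=\pi^*(\mathrm{pt})$ is the pullback of an ample class.
\end{itemize}
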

Now we first simplify the proof of \cite[Lemma 3.1]{Li23} as follows.
\begin{proposition}
\label{e>0-prop}
Let $X$ be a geometrically ruled surface with invariant $e>0$ over a smooth curve $C$ of genus $g$.
Then $X$ satisfies the BCP.	
\end{proposition}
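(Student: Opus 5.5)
Fix a curve $D$ on $X$; by Proposition \ref{Prop-ruled} we may write $D\equiv aC_0+bf$ with $a,b\in\bZ$. The formula of Proposition \ref{BL-Prop} gives
$$h^1(\mathcal O_X(D))=h^0(\mathcal O_X(D))+\Big(\frac{ae}{2}+g-1-b\Big)(a+1),$$
so it suffices to bound the correction term $\big(\frac{ae}{2}+g-1-b\big)(a+1)$ by a constant multiple of $h^0(\mathcal O_X(D))\ge 1$. Since $D$ is effective, $a\ge 0$, and I will split into cases according to the structure of irreducible curves on a ruled surface with $e>0$ (Hartshorne V.2.20): either $D=C_0$, or $D=f$, or $a>0$ and $b\ge ae$.

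The two special cases are immediate. If $D=C_0$, then $a=1,b=0$ and the correction term is $2(\frac e2+g-1)=e+2g-2$, a constant, while $h^0\ge1$. If $D=f$, then $a=0,b=1$ and the correction term is $g-2$, again a constant. In the main case $a\ge1$, $b\ge ae$, we get $\frac{ae}{2}+g-1-b\le -\frac{ae}{2}+g-1$. This is negative as soon as $a>2(g-1)/e$, and then $h^1\le h^0$. For the finitely many values $1\le a\le 2(g-1)/e$, the term is at most $(g-1)(a+1)\le (g-1)(2(g-1)/e+1)$, a constant depending only on $X$. Hence in all cases $h^1(\mathcal O_X(D))\le c_X h^0(\mathcal O_X(D))$ with $c_X=1+\max\{e+2g-2,\,g,\,(g-1)(2(g-1)/e+1)\}$, say.

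The only point requiring care is the use of the classification of irreducible curves for $e>0$: every irreducible curve other than $C_0$ and $f$ satisfies $a>0$ and $b\ge ae$. This is standard but is the key input, since without the lower bound $b\ge ae$ the correction term could grow quadratically in $a$. I would quote it from \cite[Proposition V.2.20]{Hartshorne77}. Alternatively, one can derive it directly from $D\cdot C_0\ge0$ for $D\ne C_0$, which gives $b-ae\ge0$. That direct argument is all that is needed, so the proof avoids any appeal to $l_C$ or Proposition \ref{Prop-Numerical}.
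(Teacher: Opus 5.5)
Your proposal is correct and follows essentially the paper's argument: the Biancofiore--Livorni formula, the bound $b\ge ae$ for curves other than $C_0$ and $f$ from Hartshorne's Proposition V.2.20, and a split according to whether $a$ exceeds roughly $2(g-1)/e$. You also handle $D=C_0$ and $D=f$ explicitly, which the paper excludes without comment, and your constant is slightly sharper.
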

\begin{proof}
To show the BCP for $X$, we may take a curve $D=aC_0+bf$ (in which $D\neq C_0,f$) as in Proposition \ref{Prop-ruled}.
Then $a>0$ and $b\ge ae$ by \cite[Proposition V.2.20]{Li23}.
By Proposition \ref{BL-Prop}, we have
 \begin{equation}
 \label{BL-eq}
 h^1(\mathcal O_X(D))=h^0(\mathcal O_X(D))+\bigg(\frac{ae}{2}+g-1-b\bigg)(a+1).	
 \end{equation}
if $b\ge \frac{ae}{2}+g-1$, then $h^1(\mathcal O_X(D))\le h^0(\mathcal O_X(D))$ by (\ref{BL-eq}).
If $b<\frac{ae}{2}+g-1$, then $a<\frac{2g-2}{e}$ since $ae\le b$.
As a result, by (\ref{BL-eq}) we also have
\begin{equation*}
\begin{split}
h^1(\mathcal O_X(D))&=h^0(\mathcal O_X(D))+\bigg(\frac{ae}{2}+g-1-b\bigg)(a+1)
\\&\le h^0(\mathcal O_X(D))+2(g-1)(2g-2+e)e^{-1}
\\&\le\max\{1,1+2(g-1)(2g-2+e)e^{-1}\} h^0(\mathcal O_X(D)).
\end{split}
\end{equation*}
Thus, $X$ satisfies the BCP.
\end{proof}
\begin{lemma}
\label{lem-e<0}
Let $X$ be a geometrically ruled surface with invariant $e<0$ over a smooth curve $C$ of genus $g$.
Then $X$ satisfies the BCP.
\end{lemma}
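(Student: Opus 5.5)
We follow the same strategy as in Proposition \ref{e>0-prop}, using Proposition \ref{BL-Prop} together with Hartshorne's description of curves on ruled surfaces with $e<0$. The key input is \cite[Proposition V.2.21]{Hartshorne77}. If $e<0$, then $g\ge 2$, and for an irreducible curve $D\equiv aC_0+bf$ other than $C_0$ or $f$ we have either $a=1$ and $b\ge 0$, or $a\ge 2$ and $b\ge \frac{1}{2}ae$.

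Fix such a curve $D=aC_0+bf$. The cases $D=C_0$ and $D=f$ are finitely many classes and are handled directly, since $h^0\ge 1$ and $h^1$ is a fixed number. By Proposition \ref{BL-Prop},
$$h^1(\mathcal O_X(D))=h^0(\mathcal O_X(D))+\bigg(\frac{ae}{2}+g-1-b\bigg)(a+1).$$
If $b\ge \frac{ae}{2}+g-1$, the correction term is nonpositive, so $h^1\le h^0$.

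Now suppose $b<\frac{ae}{2}+g-1$, and write $\delta:=\frac{ae}{2}+g-1-b$.

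\emph{Case $a\ge 2$.} Here $b\ge \frac{ae}{2}$, so $0<\delta\le g-1$. The difficulty is that $a$ is not bounded: since $e<0$, both $ae/2$ and $b$ can be very negative together. So we must control $h^0$ from below in terms of $a$. We use Riemann--Roch: $h^0(\mathcal O_X(D))\ge \chi(\mathcal O_X(D))=\frac12 D\cdot(D-K_X)+1-g$. Using $C_0^2=-e$, $C_0\cdot f=1$ and $K_X\equiv -2C_0+(2g-2-e)f$, a direct computation gives
$$\chi(\mathcal O_X(D))=(a+1)\bigg(b-\frac{ae}{2}+1-g\bigg)=(a+1)\bigg(b-\frac{ae}{2}\bigg)-(a+1)(g-1).$$
This is consistent with Proposition \ref{BL-Prop} combined with the vanishing of $h^2$. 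The lower bound is useful only when $b-\frac{ae}{2}$ is large compared to $g-1$, which is exactly the complementary range. So the bound via $\chi$ does not settle the case $\frac{ae}{2}\le b<\frac{ae}{2}+g-1$.

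This range is the main obstacle. There $D-\frac{a}{2}(2C_0+ef)$ is a small twist by $bf-\frac{ae}{2}f$ of a multiple of the numerically trivial-on-the-boundary class $2C_0+ef$, which satisfies $(2C_0+ef)^2=0$. To handle it I would first try pushing forward: $h^0(\mathcal O_X(D))=h^0(C,S^a\mathcal E\otimes\mathcal O_C(\mathfrak b))$. Since $e<0$, $\mathcal E$ is stable, and in characteristic zero $S^a\mathcal E$ is semistable of slope $-\frac{ae}{2}$. The twisted bundle therefore has rank $a+1$ and slope $b-\frac{ae}{2}\ge 0$. The alternative route is to use that $D$ is an irreducible curve lying in this narrow strip, and to apply adjunction: $p_a(D)\ge 0$ gives $D\cdot(D+K_X)\ge -2$. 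Combining this with $h^1(\mathcal O_X(D))\le h^1(\mathcal O_D(D))+q$, obtained from $0\to\mathcal O_X\to\mathcal O_X(D)\to\mathcal O_D(D)\to0$, and with Proposition \ref{Serre}, we get
$$h^1(\mathcal O_X(D))\le q+h^1(\mathcal O_D(D)).$$
Moreover $\deg\mathcal O_D(D)=D^2=a(2b-ae)\ge 0$. By Clifford/Riemann--Roch on $D$, $h^1(\mathcal O_D(D))\le p_a(D)$, and $p_a(D)-1=\frac12 D\cdot(D+K_X)=(a-1)\big(b-\frac{ae}{2}\big)+a(g-1)-(g-1)$. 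This could still be of order $a$, so the key estimate to aim for is $h^0(\mathcal O_X(D))\ge \epsilon a$. I would try to get it from $h^0(\mathcal O_X(D))\ge h^0(\mathcal O_X(D))-h^0(\mathcal O_X(D-f))$ together with restriction to a fibre. If that fails, I would fall back on the cited uniform boundedness of $l_C$ from \cite[Theorem 1.10]{HL26}, plugged into Proposition \ref{Prop-Numerical}. We have $|D^2|=a(2b-ae)\le 2a(g-1)$, and $l_D$ bounded gives $K_X\cdot D=-2b+a(2g-2)+ae\le m\max\{1,D^2\}$, which should force $h^0$ to grow linearly in $a$ when $D$ moves. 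When $D$ does not move (i.e.\ $h^0=1$), $D^2$ is bounded by the uniform bound, hence so is $h^1$.

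\emph{Case $a=1$.} Here $b\ge 0$, and $\delta\le g-1-\frac{|e|}{2}$, so $h^1\le h^0+2(g-1)$. This gives the BCP constant $\max\{1,2g-1\}$.
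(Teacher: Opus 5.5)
\textbf{Verdict: genuine gap.} The preliminary reductions are fine. Proposition \ref{BL-Prop} disposes of $b\ge\frac{ae}{2}+g-1$. For $a=1$, your bound $\delta\le g-1+\frac e2$ gives $h^1\le(2g-1)h^0$, more directly than the paper's Riemann--Roch computation. (Small slip: $e<0$ does not force $g\ge2$, since $g=1$, $e=-1$ occurs; but then $\delta\le0$ and that case is trivial.)

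The gap is the only substantive case: $a\ge2$ and $0<c:=b-\frac{ae}{2}<g-1$. The boundary $c=0$, i.e.\ $D^2=0$, is absorbed by Proposition \ref{Prop-Numerical}. In this strip $h^1=h^0+(g-1-c)(a+1)$ with $a$ unbounded. You therefore need a uniform estimate $h^0(\mathcal O_X(D))\ge\epsilon a$ for irreducible $D$. The proposal never proves this; it only lists attempts, and none of them works as stated:
\begin{itemize}
\item Semistability of $S^a\mathcal E\otimes\mathcal O_C(\mathfrak b)$ with slope $c\in(0,g-1)$ gives no lower bound on $h^0$. Here $\chi=(a+1)(c+1-g)<0$, and a general semistable bundle of such slope has no sections.
\item The restriction/adjunction route only gives $h^1\le q+p_a(D)$. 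Since $p_a(D)=1+(a-1)c+a(g-1)$, this is again linear in $a$.
\item The inequality $h^0(D)\ge h^0(D)-h^0(D-f)$ is vacuous.
\item The fallback via $l_C$ fails. In the strip $K_X\cdot D=2a(g-1)-2c$ and $D^2=2ac$, so $l_D<(g-1)/c\le 2(g-1)$ holds automatically. The uniform bound on $l_C$ therefore carries no information about $h^0$, and it does not bound $D^2$ for rigid $D$.
\item The other hypothesis of Proposition \ref{Prop-Numerical}, $|D^2|\le m\,h^0$, again requires $h^0\gtrsim a$, because $D^2=2ac\ge a$.
\end{itemize}

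For comparison, the paper treats this range as follows: $D$ is ample, write $D=mD_0$ with $D_0=a_0C_0+b_0f$, use $h^0(mD_0)=\frac{m^2D_0^2}{2}+O(m)$ for $m\ge N$, and use $a\le a_0N$ for $m<N$. You should not expect to borrow this to close your gap, because it does not produce a constant independent of $D$:
\begin{itemize}
\item $N$ and $a_0$ depend on the ray through $D$.
\item In the strip $m\,(b_0-\frac{a_0e}{2})=c<g-1$ with $b_0-\frac{a_0e}{2}\ge\frac12$, so always $m<2g-2$. The asymptotic regime is never reached.
\item The resulting constant $1+(g-1)(a_0N+1)$ therefore grows with $a$. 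For instance, if $g=2$ and $e=-1$, every class in the strip is $aC_0+\frac{1-a}{2}f$ with $a$ odd, hence primitive.
\end{itemize}
So the missing idea is a uniform lower bound on $h^0(\mathcal O_X(D))$ for irreducible curves in the strip $0<b-\frac{ae}{2}<g-1$. As written, your argument is incomplete at its only nontrivial step.
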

\begin{proof}
By \cite[Theorem 1.10]{HL26}, $X$ admits the uniform boundedness of $l_C$, and we may assume that $g\ge2$.
Note that $X$ satisfies the BNC since $\rho(X)=2$.
Then by Proposition \ref{Prop-Numerical}, it suffices to show that there is a constant $c_X>0$ such that $h^1(\mathcal O_X(D))\le c_Xh^0(\mathcal O_X(D))$ for every curve $D$ with $D^2>0$ on $X$.
Now we may take a curve $D=aC_0+bf$ (in which $D\neq C_0,f$) as in Proposition \ref{Prop-ruled}.
Then either $a=1, b>0$ or $a\ge2, b\ge\frac{ae}{2}$ by \cite[Proposition V.2.21]{Hartshorne77}.

If $b\ge\frac{ae}{2}+g-1$, then $h^1(\mathcal O_X(D))\le h^0(\mathcal O_X(D))$ by Proposition \ref{BL-Prop}.
Now we may assume that $b<\frac{ae}{2}+g-1$.
Note that $K_X\equiv -2C_0+(2g-2-e)f$ by Proposition \ref{Prop-ruled}.

(Case I). We assume that $a=1$ and $b>0$.
Then $0<b<g+\frac{e}{2}-1$.
From this together with $e<0$, it follows that $g>1+\frac{e}{2}$.
Note that $$0<-e<C_0\cdot D=b-e<g-\frac{e}{2}-1, \quad f\cdot D=1.$$
As a result, we have 
\begin{equation}
\begin{split}
\label{b>0-e<0-eq}
	(K_X-D)\cdot D&=-3(C_0\cdot D)+(2g-2-e-b)(f\cdot D)
	\\&\le 2g-2-e.
\end{split}
\end{equation}
Therefore, by the Riemann-Roch theorem, Proposition \ref{Serre} and (\ref{b>0-e<0-eq}), we have
\begin{equation*}
\begin{split}
h^1(\mathcal O_X(D))&=h^0(\mathcal O_X(D))+\frac{(K_X-D)\cdot D}{2}+h^2(\mathcal O_X(D))-\chi(\mathcal O_X)
\\&\le (1+3(g-1)-e)h^0(\mathcal O_X(D)),
\end{split}	
\end{equation*}
where we use $q(X)=g$ by \cite[Proposition V.2.5]{Hartshorne77}.

(Case II). We assume that $a\ge2$ and $b\ge\frac{ae}{2}$.
Note that $D^2=0$ if and only if  $b=\frac{ae}{2}$.
So we have 
\begin{equation}
\label{b-value-eq}
\bigg(\frac{ae}{2}+g-1\bigg)-b\in (0,g-1)\cap\bZ,
\end{equation}
 and $D$ is ample by \cite[Proposition V.2.21]{Hartshorne77}.
As a result, $\kappa(X,D)=2$ and $D^2\ge1$.
Fix $a_0$ and $b_0$ such that $$D_0=a_0C_0+b_0f, \quad  a_0>0,  \quad b_0\in\bigg(\frac{a_0e}{2}, \frac{a_0e}{2}+g-1\bigg)\cap\bZ, \quad D=mD_0. $$
By \cite[Corollary 1.4.41]{Lazarsfeld04} and $\kappa(X,D)=2$, there is a positive integer $N\gg0$, for each $m\ge N$, we have
\begin{equation}
\label{kappa2-e<0-eq}
	h^0(\mathcal O_X(mD_0))=\frac{m^2D_0^2}{2}+O(m).
\end{equation}
Then by Proposition \ref{BL-Prop}, (\ref{b-value-eq}) and (\ref{kappa2-e<0-eq}), we have
\begin{equation*}
\begin{split}
	h^1(\mathcal O_X(D))&=h^1(\mathcal O_X(mD_0))
	\\&=h^0(\mathcal O_X(mD_0))+\bigg(\bigg(\frac{ae}{2}+g-1\bigg)-b\bigg)(ma_0+1)
	\\&\le h^0(\mathcal O_X(mD_0)+(g-1)(ma_0+1)
	\\&\le 2h^0(\mathcal O_X(mD_0))
	\\&=2h^0(\mathcal O_X(D)),
\end{split}
\end{equation*}
where we use  $$\frac{m^2D_0^2}{2}+O(m)>(g-1)(ma_0+1)$$ since $m\ge N$ and $N\gg0$.

If $m\le N$, then $a\le a_0N$.
By Proposition \ref{BL-Prop}, we have
\begin{equation*}
\begin{split}
h^1(\mathcal O_X(D))&\le h^0(\mathcal O_X(D))+(g-1)(a+1)
\\&\le (1+(g-1)(a_0N+1))h^0(\mathcal O_X(D)).
\end{split}
\end{equation*}
In conclusion, $X$ satisfies the BCP.
\end{proof}
\begin{proposition}
\label{reduced-e=0-prop}
Let $X$ be a geometrcially ruled surface with invariant $e=0$ over a smooth curve $C$ of genus $g$.
To show the BCP for $X$,it suffices to show that there is a constant $c_X>0$ such that $h^1(\mathcal O_X(D))\le c_Xh^0(\mathcal O_X(D))$ for every ample curve $D=aC_0+bf$ with $a>0$ and $0<b<g-1$.
\end{proposition}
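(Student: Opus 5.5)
We follow the pattern of Lemma \ref{lem-e<0}. When $e=0$, the Mori cone satisfies $\NE(X)=\bR_{\ge0}[C_0]+\bR_{\ge0}[f]$ and $C_0^2=0$. Every curve $D=aC_0+bf$ with $D\neq C_0,f$ therefore has $a\ge0$ and $b\ge0$; here we use \cite[Propositions V.2.20, V.2.21]{Hartshorne77}, depending on whether $\mathcal E$ is decomposable. The plan is to dispose of every curve except those described in the statement, using only Proposition \ref{BL-Prop}, Proposition \ref{Prop-Numerical} and Proposition \ref{Serre}. The remaining curves will be exactly the ample curves with $a>0$ and $0<b<g-1$.

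\textbf{Step 1: the standing assumptions.} By \cite[Theorem 1.10]{HL26}, $X$ admits the uniform boundedness of $l_C$. Since $\rho(X)=2$, $X$ also satisfies the BNC. By Proposition \ref{Prop-Numerical}, it therefore suffices to bound $h^1(\mathcal O_X(D))$ by a constant multiple of $h^0(\mathcal O_X(D))$ for every curve $D$. If $g\le1$, then $b\ge g-1$ holds automatically for all such $D$, so the formula of Proposition \ref{BL-Prop} with $e=0$ gives $h^1\le h^0$ at once. Hence we may assume $g\ge2$.

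\textbf{Step 2: the easy curves.} The curves $C_0$ and $f$ are finitely many classes up to numerical equivalence. Their $h^1$ is bounded by Riemann--Roch together with Proposition \ref{Serre}, so they cause no trouble. For a curve $D=aC_0+bf$, Proposition \ref{BL-Prop} with $e=0$ reads
\[
h^1(\mathcal O_X(D))=h^0(\mathcal O_X(D))+(g-1-b)(a+1).
\]
If $b\ge g-1$, this gives $h^1\le h^0$. If $a=0$, then $D\equiv bf$ is a fibre, already handled. If $b=0$ with $a>0$, then $D\equiv aC_0$ has $D^2=0$. In that case we bound $h^1$ directly: the term $(K_X-D)\cdot D=-2a(C_0\cdot f)\cdot$ is computed from $K_X\equiv-2C_0+(2g-2)f$, which gives $(K_X-D)\cdot D=a(2g-2)$. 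I would then like to control $a$. Uniform boundedness of $l_D$ gives $K_X\cdot D=(2g-2)a\le m(X)h^0(\mathcal O_X(D))$. Combining this with Riemann--Roch and Proposition \ref{Serre} yields $h^1\le h^0+(g-1)a+q-1\le c\,h^0$. This is precisely the role of $l_C$ for curves with $C^2=0$.

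\textbf{Step 3: what remains.} The only curves left are those with $a>0$ and $0<b<g-1$. For these, $D^2=2ab>0$ and both intersection numbers $D\cdot C_0=b$ and $D\cdot f=a$ are positive. Since $\NE(X)$ is spanned by $C_0$ and $f$, Kleiman's (or Nakai's) criterion shows that $D$ is ample, which is exactly the reduction stated. The only delicate point I expect is the case $b=0$ in Step 2: one has to check that the $l_C$ bound for $C^2=0$ really controls $a$ in terms of $h^0$. If that fails, I would instead enlarge the reduced class to include $b=0$, but the $l_C$ definition is designed precisely for this purpose.
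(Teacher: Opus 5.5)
Your argument is correct and follows essentially the paper's route: the uniform bound on $l_C$, Proposition \ref{BL-Prop} to dispose of $b\ge g-1$, and ampleness of the remaining classes from $\NE(X)=\bR_{\ge0}[C_0]+\bR_{\ge0}[f]$. The only difference is how you treat curves with $D^2=0$. The paper discards all of them at once through the alternative $|C^2|\le m(X)h^0(\mathcal O_X(C))$ in Proposition \ref{Prop-Numerical}, whereas your Step 1 invokes that proposition only vacuously and you instead handle $f$ and the classes $aC_0$ by hand via $K_X\cdot D=(2g-2)a\le m(X)h^0(\mathcal O_X(D))$. That is precisely what the proposition does internally, so the case $b=0$ you flagged as delicate is fine.
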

\begin{proof}
By \cite[Theorem 1.10]{HL26}, $X$ admits the uniform boundedness of $l_C$, and we may assume that $g\ge2$.
Note that $X$ satisfies the BNC since $\rho(X)=2$.
By Proposition \ref{Prop-Numerical}, it suffices to show that there is a constant $c_X>0$ such that $h^1(\mathcal O_X(D))\le c_Xh^0(\mathcal O_X(D))$ for every curve $D=aC_0+bf$ with $D^2>0$ on $X$.
Note that $\NE(X)=\bR_{\ge0}[C_0]+\bR_{\ge0}[f]$.
Then $a>0$ and $b>0$.
As a result, $D$ is ample by \cite[Proposition V.2.20]{Hartshorne77}.
If $b\ge g-1$, then $h^1(\mathcal O_X(D))\le h^0(\mathcal O_X(D))$ by  Proposition \ref{BL-Prop}.
Now we may assume that $0<b< g-1$ and $g\ge2$.
This ends the proof of Proposition \ref{reduced-e=0-prop}.
\end{proof}
\begin{lemma}
\label{e=0-deco-lem}
Let $X$ be a geometrically ruled surface with invariant $e=0$ over a smooth curve $C$ of genus $g$, determined by a normalized locally free sheaf $\mathcal E$.
If $\mathcal E$ is decomposable, then $X$ satisfies the BCP.
\end{lemma}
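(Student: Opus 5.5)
The plan is to reduce, via Proposition \ref{reduced-e=0-prop}, to ample curves $D=aC_0+bf$ with $a>0$ and $0<b<g-1$, where $g\ge2$ and $bf$ stands for $\pi^*B$ with $\deg B=b$. By Proposition \ref{BL-Prop} with $e=0$,
$$h^1(\mathcal O_X(D))=h^0(\mathcal O_X(D))+(g-1-b)(a+1)\le h^0(\mathcal O_X(D))+(g-2)(a+1).$$
Since $h^0(\mathcal O_X(D))\ge1$, it suffices to find a constant $\varepsilon>0$, depending only on $X$, with $h^0(\mathcal O_X(D))\ge\varepsilon(a+1)$ for every such curve $D$; then $c_X=1+(g-2)/\varepsilon$ works. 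The rest of the plan is about this linear lower bound.

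Write $\mathcal E=\mathcal O_C\oplus\mathcal L$ with $\deg\mathcal L=0$. Then $\pi_*\mathcal O_X(D)\cong S^a(\mathcal E)\otimes\mathcal O_C(B)$, so
$$h^0(\mathcal O_X(D))=\sum_{i=0}^{a}h^0\big(\mathcal O_C(B)\otimes\mathcal L^{i}\big).$$
A section of $\mathcal O_X(D)$ has the form $\sum_i s_i x^{a-i}y^{i}$ with $s_i\in H^0(B\otimes\mathcal L^i)$. The two disjoint sections $C_0=\{x=0\}$ and $C_1=\{y=0\}$ (both $\equiv C_0$) must not be components of the irreducible curve $D\neq C_0,C_1$. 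I expect this to force $s_0\neq0$ and $s_a\neq0$, so that both $B$ and $B\otimes\mathcal L^a$ are effective.

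I would then split by the order of $\mathcal L$ in $\Pic^0(C)$.

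If $\mathcal L$ is torsion of order $r$, then $\mathcal L^{i}\cong\mathcal O_C$ for all $i\equiv0 \pmod r$. Since $s_0\ne0$, each such $i$ contributes at least $1$ to the sum, so $h^0(\mathcal O_X(D))\ge \lfloor a/r\rfloor+1\ge (a+1)/r$. This gives $\varepsilon=1/r$.

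If $\mathcal L$ is non-torsion, I would use the effectivity of $B$ and $B\otimes\mathcal L^{a}$. It gives $\mathcal L^a\cong\mathcal O_C(E'-E)$ with $E,E'$ effective of degree $b<g-1$. The points $\mathcal L^{a}$ therefore lie in the difference set $W_b-W_b\subseteq\Pic^0(C)$, a proper closed subvariety of dimension at most $2b<2g-2$. The aim is to show that either only finitely many $a$ occur, so $a$ is bounded and the estimate is trivial, or the subgroup generated by $\mathcal L$ meets $W_b-W_b$ in such a way that many intermediate $B\otimes\mathcal L^i$ are effective.

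This non-torsion case is the main obstacle. What I would try first is to exploit irreducibility of $D$ more strongly: restricting $D$ to $C_0$ and $C_1$ gives $D|_{C_0}\cong B$ and $D|_{C_1}\cong B\otimes\mathcal L^{a}$ up to the normalization. I would then analyse the Zariski closure of $\{\mathcal L^a\}$, a union of translates of a subtorus, intersected with $W_b-W_b$. If this forces $a$ to be bounded, the bound $h^1\le (1+(g-2)(a_{\max}+1))h^0$ finishes the proof. If not, I would fall back on a Mumford-type lower bound for $h^0$ in the sum above, using the multiplication maps $H^0(B\otimes\mathcal L^i)\otimes H^0(\mathcal L^{j})\to H^0(B\otimes\mathcal L^{i+j})$.
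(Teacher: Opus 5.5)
Your reduction, the formula $h^0(\mathcal O_X(D))=\sum_{i=0}^{a}h^0(\mathcal O_C(B)\otimes\mathcal L^{i})$, the fact that irreducibility of $D$ forces $s_0\neq0\neq s_a$, and the torsion case (giving $c_X=1+(g-2)r$) are all correct. The gap is the non-torsion case, which you leave open, and neither of your suggested routes can close it.

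First, $W_b-W_b$ is not a proper subvariety in general. It has dimension $\min(2b,g)$ inside $\Pic^0(C)$. Your bound $2b<2g-2$ is compared with the wrong number, since $\dim\Pic^0(C)=g$. So $W_b-W_b$ is \emph{all} of $\Pic^0(C)$ once $2b\ge g$, which is compatible with $0<b<g-1$ as soon as $g\ge4$. In that range nothing bounds $a$. Second, $H^0(\mathcal L^{j})=0$ for $j\neq0$ when $\mathcal L$ is non-torsion, so the multiplication maps you mention give no lower bound.

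In fact the missing case is false, so no argument can fill it. Take $C$ non-hyperelliptic of genus $4$, $b=2$, and $\mathcal L$ very general in $\Pic^0(C)$, with class $\ell$.

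\emph{The curves.} For each $a\ge2$, surjectivity of $C^{(2)}\times C^{(2)}\to\Pic^0(C)$, $(E,E')\mapsto E'-E$, gives $\mathcal L^{a}\cong\mathcal O_C(E'-E)$ with $E,E'$ effective of degree $2$. Avoiding the image of a $3$-dimensional bad locus, we may take $E,E'$ reduced and disjoint. Then $s_0x^{a}+s_ay^{a}$, with $\operatorname{div}s_0=E$ and $\operatorname{div}s_a=E'$, defines an irreducible curve $D_a\equiv aC_0+2f$. It contains no fibre because $E\cap E'=\emptyset$. Over the generic point of $C$ its equation is $T^{a}=h$ with $h$ having a simple zero at each point of $E$, so it is Eisenstein.

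\emph{No intermediate sections.} Suppose $\mathcal O_C(E)\otimes\mathcal L^{j}$ were effective for some $0<j<a$. Then $(u,v,w)=([E],[E]+j\ell,[E]+a\ell)\in W_2^{3}$ would satisfy $jw=av-(a-j)u$. The map $(u,v)\mapsto av-(a-j)u$ sends $W_2\times W_2$ onto a $4$-dimensional set, because four general points of the canonical curve span $\bP^3$. That set cannot lie in the surface $jW_2$, so such triples form a family of dimension at most $3$. Hence for each pair $(j,a)$ they occur only for $\ell$ in a proper closed subset, and a very general $\ell$ avoids all of them.

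\emph{Conclusion.} Therefore $h^0(\mathcal O_X(D_a))=h^0(E)+h^0(E')=2$, whereas Proposition \ref{BL-Prop} gives $h^1(\mathcal O_X(D_a))=a+3$. The BCP fails for this decomposable $e=0$ surface.

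The paper's proof hides the same problem. It uses $h^i(\mathcal O_X(D))=(a+1)h^i(\mathcal O_C(\mathfrak b))$, which by your own decomposition holds only when $\mathcal L\cong\mathcal O_C$, i.e.\ $X\cong C\times\bP^1$; $\deg\mathfrak e=0$ does not make $\mathfrak e$ trivial. What is actually provable is what you proved: the BCP holds when $\mathcal L$ is torsion, and the paper's argument is your case $r=1$. The statement as given needs such an extra hypothesis.
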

\begin{proof}
To show the BCP for $X$, we may take a curve $D=aC+bf$ with $a>0$ and $0<b< g-1$ by Proposition \ref{reduced-e=0-prop}.
As noted in \cite[Notation 2.8.1]{Hartshorne77}, let $\mathfrak e$ be the divisor on $C$ corresponding to the invertible sheaf $\wedge^2\mathcal E$, so that $\deg\mathfrak e=-e=0$.
Now let $D=aC_0+\mathfrak bf$ for some divisor $\mathfrak b$ on $C$ with $\deg\mathfrak b=C_0\cdot D=b$.
By \cite[Lemma 4]{FP05}, we have the following result:
\begin{equation}
\label{h1h0-eq1}
	h^i(\mathcal O_X(D))=(a+1)h^i(\mathcal O_C(\mathfrak b)) \text{ for } i\in\{0,1\}.
\end{equation}
Since $D$ is a curve, $h^0(\mathcal O_X(D))>0$.
By  Proposition \ref{BL-Prop}, we have
\begin{equation}
\label{h1h0-eq2}
h^1(\mathcal O_X(D))\ge h^0(\mathcal O_X(D))>0, \quad h^1(\mathcal O_C(\mathfrak b))\ge h^0(\mathcal O_X(\mathfrak b))>0.
\end{equation}
So by (\ref{h1h0-eq2}), we have
\begin{equation}
\label{c_X-eq}
c_X:=\max_{0<\deg\mathfrak b<g-1}\frac{h^1(\mathcal O_C(\mathfrak b))}{h^0(\mathcal O_C(\mathfrak b))}>0,
\end{equation}
which is independent of the choice of $D$.
As a result, by (\ref{h1h0-eq1})  and (\ref{c_X-eq}), we have
$$
h^1(\mathcal O_X(D))\le c_Xh^0(\mathcal O_X(D)).
$$
Therefore, $X$ satisfies the BCP.
\end{proof}
\begin{lemma}
\label{liminf-lem}
Let $X$ be a geometrically ruled surface with invariant $e=0$ over a smooth curve $C$ of genus $g$, determined by a normalized locally free sheaf $\mathcal E$.
If $e=0$ and $\mathcal E$ is indecomposable, then $X$ satisfies the BCP if and only if  $$\liminf_{a_1\to+\infty}\frac{h^0(\mathcal O_X(aC_0+bf))}{a}>0$$ for  each $b\in (0,g-1)\cap\bZ$.
\end{lemma}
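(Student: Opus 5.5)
By Proposition \ref{reduced-e=0-prop}, the BCP for $X$ is equivalent to a uniform bound $h^1(\mathcal O_X(D))\le c_Xh^0(\mathcal O_X(D))$ for ample curves $D=aC_0+bf$ with $a>0$ and $0<b<g-1$. Here $e=0$, so Proposition \ref{BL-Prop} becomes
\begin{equation*}
h^1(\mathcal O_X(aC_0+bf))=h^0(\mathcal O_X(aC_0+bf))+(g-1-b)(a+1).
\end{equation*}
The whole question is therefore whether $(g-1-b)(a+1)$ is dominated by a constant multiple of $h^0(\mathcal O_X(aC_0+bf))$ as $a$ ranges over the positive integers, for each of the finitely many integers $b\in(0,g-1)$. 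For a fixed such $b$ we have $1\le g-1-b\le g-2$, so the extra term grows linearly in $a$ with a positive coefficient. Since numerical and linear equivalence differ on $X$ by $\pi^*\Pic^0 C$, I read $h^0(\mathcal O_X(aC_0+bf))$ as $h^0$ of a divisor $aC_0+\mathfrak b f$ with $\deg\mathfrak b=b$. The liminf condition should then be understood in the same way, uniformly over the representatives that actually carry curves.

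For sufficiency, assume that for each $b\in(0,g-1)\cap\bZ$ there are $\varepsilon_b>0$ and $A_b$ with $h^0(\mathcal O_X(aC_0+bf))\ge\varepsilon_b a$ for $a\ge A_b$. For such $a$, $(g-1-b)(a+1)\le 2(g-1)a\le 2(g-1)\varepsilon_b^{-1}h^0$, so $h^1\le(1+2(g-1)\varepsilon_b^{-1})h^0$. For the finitely many $a<A_b$, the divisor is a curve, so $h^0\ge1$, which gives $h^1\le h^0+(g-1)(A_b+1)\le(1+(g-1)(A_b+1))h^0$. Taking the maximum of these constants over the finitely many $b$ yields $c_X$. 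Together with Proposition \ref{reduced-e=0-prop}, this proves the BCP.

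For necessity, assume the BCP holds with constant $c_X$. Then every curve $D=aC_0+bf$ with $0<b<g-1$ satisfies $(g-1-b)(a+1)\le c_Xh^0(\mathcal O_X(D))$, hence $h^0(\mathcal O_X(D))/a\ge 1/c_X$, since $g-1-b\ge1$ and $a+1>a$. This gives the liminf bound along all $a$ for which $|aC_0+bf|$ contains an irreducible curve. The main obstacle is passing from curves to arbitrary classes $aC_0+bf$ with $a\to\infty$. I would handle it as follows. Every class $aC_0+bf$ with $a,b>0$ is ample because $e=0$, so by Bertini, once $h^0\ge2$ and the system is base-point free or at least has no fixed component, a general member is irreducible. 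Where that fails, I would bound $h^0$ from below using subadditivity, $h^0(D+D')\ge h^0(D)$ whenever $h^0(D')>0$, together with the bounded negativity of curves of class $C_0$ and $f$. If these do not suffice, I would interpret the liminf in the statement as taken along those $a$ for which $aC_0+bf$ is represented by a curve, since this is exactly what the BCP controls. Either way, the core of both directions is the displayed identity from Proposition \ref{BL-Prop}, which reduces the BCP to linear growth of $h^0$ in $a$.
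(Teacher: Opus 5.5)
Your argument is the paper's own proof: the same reduction through Proposition \ref{reduced-e=0-prop}, the same identity $h^1=h^0+(g-1-b)(a+1)$ from Proposition \ref{BL-Prop}, the same split into large $a$ (linear growth of $h^0$) versus boundedly many $a$ (where $h^0\ge1$ suffices), and a maximum over the finitely many $b$. The paper also passes silently over the two subtleties you raise: first, that $bf$ is only defined up to $\pi^*\Pic^0 C$, so the constants must be uniform in $\mathfrak b$; second, that the BCP only controls $h^0$ along those $a$ for which the class carries an irreducible curve. Its $(\Rightarrow)$ step likewise just applies the BCP to ``the curve $D=aC_0+bf$'', so your reading is exactly what both arguments prove, and your tentative Bertini fix would not close the gap anyway: a system without fixed components can still be composed with a pencil, and $h^0\ge2$ cannot simply be assumed (the necessity claim is precisely about lower bounds on $h^0$).
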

\begin{proof}
To show the BCP for $X$, we may take a curve $D=aC_0+bf$ with $a>0$ and $0<b< g-1$ by Proposition \ref{reduced-e=0-prop}.
By Proposition \ref{BL-Prop}, we have
\begin{equation}
\label{indecom-e=0-eq1}
 h^1(\mathcal O_X(D))=h^0(\mathcal O_X(D))+(g-1-b)(a+1).
\end{equation}
($\Rightarrow$) Fix $b\in(0,g-1)$.
Since $X$ satisfies the BCP, there is a positive constant $c_X>1$ such that 
\begin{equation}
\label{c_X}
h^1(\mathcal O_X(D))\le c_Xh^0(\mathcal O_X(D)).	
\end{equation} 
By (\ref{indecom-e=0-eq1}) and (\ref{c_X}), 
we have
$$
(c_X-1)h^0(\mathcal O_X(D))\ge (g-1-b)(a+1).
$$
Therefore, we have
\begin{equation}
\label{inf-lim>0}
\liminf_{a\to\infty}\frac{h^0(\mathcal O_X(aC_0+bf))}{a}>0.
\end{equation}

($\Leftarrow$) For each $b\in (0,g-1)$, (\ref{inf-lim>0}) implies that
 there exist a positive integer $N_b$ and a constant $k_b>0$ such that for $a>N_b$, we have
\begin{equation}
\label{indecom-e=0-eq3}
k_bh^0(\mathcal O_X(aC_0+bf))\ge (g-1-b)(a+1).
\end{equation}
As a result, if $a>N_b$, then by (\ref{indecom-e=0-eq1}) and (\ref{indecom-e=0-eq3}), we have
$$
h^1(\mathcal O_X(D))\le (k_b+1)h^0(\mathcal O_X(D)).
$$
If $a\le N_b$, then by (\ref{indecom-e=0-eq1}), we have
$$
h^1(\mathcal O_X(D))\le ((g-1)(N_b+1)+1)h^0(\mathcal O_X(D)).
$$
Now we take $c_X:=\max_{0<b<g-1}\{k_b+1, (g-1)(N_b+1)+1\}$.
Then for every curve $D=aC_0+bf$ with $a>0, 0<b<g-1$, we have
$$
h^1(\mathcal O_X(D))\le c_Xh^0(\mathcal O_X(D)).
$$
Thus, $X$ satisfies the BCP by Proposition  \ref{reduced-e=0-prop}.
\end{proof}
\begin{proof}[Proof of Theorem \ref{Thm-Ruled}]
If $e\ne0 $, then $X$ satisfies the BCP by Proposition \ref{e>0-prop} and Lemma \ref{lem-e<0}.
If $e=0$ and $\mathcal E$ is decomposable, then $X$ satisfies the BCP by Lemma \ref{e=0-deco-lem}.
If $e=0$ and $\mathcal E$ is indecomposable, then the proof follows from Lemma \ref{liminf-lem}.
\end{proof}
\begin{proof}[Proof of Theorem \ref{kappa2-thm}]
Since $\kappa(X)=2$, we may assume that $K_X=aC_1+a'C_2$ with $a,a'>0$.
If $C_1^2<0$ and $C_2^2<0$, then $X$ satisfies the BCP by \cite[Lemma 3.4]{Li21}.
So we may assume that either (i) $C_1^2=C_2^2=0$ or (ii) $C_1^2=0, C_2^2<0$.
Note that $X$ satisfies the BCP since $\rho(X)=2$.
By \cite[Proposition 3.4]{Li23} and Proposition \ref{Prop-Numerical}, it suffices to show that there exists a positive constant $m(X)>1$ such that either $h^1(\mathcal O_X(D))\le m(X)h^0(\mathcal O_X(D))$ or $D^2\le m(X)h^0(\mathcal O_X(D))$ for every curve $D$ with $D^2>0$ on $X$.
We may take $D=a_1C_1+a_2C_2$ with $a_1,a_2>0$ by \cite[Proposition 3.1]{Li21}.

{\bf Case I: $C_1^2=0$ and $C_2^2=0$.}
If $a_1>a$ and $a_2>a'$, then $D-K_X=(a_1-a)C_1+(a_2-a')C_2$ is big.
Note that $C_1\cdot C_2>0$.
Then $$(D-K_X)\cdot C_1=(a_2-a')(C_1\cdot C_2)>0, \quad (D-K_X)\cdot C_2=(a_1-a)(C_1\cdot C_2)>0.$$
Therefore, $D-K_X$ is nef and big since $\NE(X)=\bR_{\ge0}[C_1]+\bR_{\ge0}[C_2]$.
By Kawamata-Viehweg vanishing theorem, we have 
$$h^1(\mathcal O_X(D))=h^1(\mathcal O_X(K_X+(D-K_X)))=0.$$

If $a_1\le a, a_2\le a'$, then we have 
\begin{equation*}
\begin{split}
		D^2 &=2a_1a_2(C_1\cdot C_2) 
		\\ & \le 2aa'(C_1\cdot C_2)
		\\ &\le  (2aa'(C_1\cdot C_2))h^0(O_X(D)).
\end{split}
\end{equation*}

Now we may assume that either (i) $a_1>a$ and $a_2\le a'$, or (ii) $a_2\le a$ and $a_1>a'$.
Without loss of generality, we may assume that $a_1>a$ and $a_2\le a'$.
Note that 
\begin{equation}
\begin{split}
\label{K_XD-eqI}
(K_X-D)\cdot D=(a_1(a'-2a_2)+aa_2)(C_1\cdot C_2).
\end{split}
\end{equation}
By the Riemann-Roch theorem, we have
\begin{equation}
\begin{split}
\label{RR-eqI}
h^1(\mathcal O_X(D))&=h^0(\mathcal O_X(D))+\frac{(K_X-D)\cdot D}{2}+\chi(\mathcal O_X)-h^2(\mathcal O_X(D))
\end{split}
\end{equation}
If $a'-2a_2\le0$, then by (\ref{K_XD-eqI}), (\ref{RR-eqI}) and Proposition \ref{Serre}, we have
\begin{equation}
\begin{split}
\label{RR-eqII}
h^1(\mathcal O_X(D))&=h^0(\mathcal O_X(D))+\frac{(K_X-D)\cdot D}{2}+\chi(\mathcal O_X)-h^2(\mathcal O_X(D))
\\&\le h^0(\mathcal O_X(D))+\frac{aa'(C_1\cdot C_2)}{2}+q(X)-1.
\\&\le \bigg(1+q(X)+\frac{aa'(C_1\cdot C_2)}{2}\bigg)h^0(\mathcal O_X(D)).
\end{split}
\end{equation}
So we may assume that $a'-2a_2>0$.
Then by (\ref{K_XD-eqI}), we have
\begin{equation}
\label{K_X-lim-eq}
\liminf_{a_1\to +\infty}\frac{(K_X-D)\cdot D}{a_1}>0.
\end{equation}

Now we claim that for  every curve $D=a_1C_1+a_2C_2$ with $a_1>a$ and $a'-2a_2>0$, 
\begin{equation}
\label{BCP-eqI}
	h^1(\mathcal O_X(D))\le c_Xh^0(\mathcal O_X(D)), \quad c_X>1.
\end{equation}
 if and only if 
 \begin{equation}
 \label{I-Lim-eqI}
 	\liminf_{a_1\to+\infty}\frac{h^0(\mathcal O_X(a_1C_1+a_2C_2))}{a_1}>0.
 \end{equation}
 
 ($\Rightarrow$)  (\ref{RR-eqI}), (\ref{K_X-lim-eq}) and (\ref{BCP-eqI}) imply that
 $$
 	\liminf_{a_1\to+\infty}\frac{h^0(\mathcal O_X(a_1C_1+a_2C_2))}{a_1}\ge \frac{1}{c_X-1}\liminf_{a_1\to +\infty}\frac{(K_X-D)\cdot D}{a_1}>0.
 $$
 
($\Leftarrow$)   (\ref{I-Lim-eqI}) implies that there is a positive  constant $N$ and a constant $k>0$ such that for $a_1>N$, we have 
\begin{equation}
\label{BCP-RR-II}
kh^0(\mathcal O_X(a_1C_1+a_2C_2))\ge \frac{a'(C_1\cdot C_2)}{2}a_1+\frac{aa_2(C_1\cdot C_2)}{2}+\chi(\mathcal O_X)-h^2(\mathcal O_X(D)).
\end{equation}
So by (\ref{K_XD-eqI}), (\ref{RR-eqI}), and (\ref{BCP-RR-II}), we have
$$
h^1(\mathcal O_X(D))\le (k+1)h^0(\mathcal O_X(D)).
$$
If $a_1\le N$, then by  (\ref{K_XD-eqI}), (\ref{RR-eqI}) and Proposition \ref{Serre}, we have
$$
h^1(\mathcal O_X(D))\le \bigg(1+q(X)+\frac{a'(N+a)(C_1\cdot C_2)}{2}\bigg)h^0(\mathcal O_X(D)).
$$
Then for every curve  $D=a_1C_1+a_2C_2$ with $a_1>a$ and $a'-2a_2>0$,  we have
$$
h^1(\mathcal O_X(D))\le c_Xh^0(\mathcal O_X(D)),
$$
where $$c_X:=\max\bigg\{k+1, 1+q(X)+\frac{a'(N+a)(C_1\cdot C_2)}{2}\bigg\}.$$
Thus, $X$ satisfies the BCP.

{\bf Case II. $C_1^2=0$ and $C_2^2<0$}.
By \cite[Proposition 3.5]{Li23}, we may assume that $a_1>a$ and $0<2a_2<a'$.
Note that $D=a_1C_1+a_2C_2$ and 
\begin{equation}
\begin{split}
\label{K_XD-eqII}
(K_X-D)\cdot D&=(a_1(a'-2a_2)+aa_2)(C_1\cdot C_2)+a_2(a_2-a')(-C_2^2).
\end{split}
\end{equation}
Observe that the negative contribution in \eqref{K_XD-eqII} satisfies
\begin{equation*}
\begin{split}
|a_2(a_2-a')(-C_2^2)|&=a_2(a'-a_2)(-C_2^2)
\\&\le \frac{a'^2}{2}(-C_2^2).
\end{split}
\end{equation*}
The right-hand side is a constant.
Therefore, we have
\begin{equation*}
\liminf_{a_1\to+\infty}\frac{(K_X-(a_1C_1+a_2C_2))\cdot(a_1C_1+a_2C_2)}	{a_1}>0.
\end{equation*}
Therefore, using the same argument as in Case I,   $X$ satisfies the BCP if and only if 
\begin{equation*}
\liminf_{a_1\to+\infty}\frac{h^0(\mathcal O_X(a_1C_1+a_2C_2))}{a_1}>0,
\end{equation*}
with $0<2a_2<a'$.
\end{proof}

\end{document}